\makeatletter \@namedef{subjclassname@2010}{%
  \textup{2010} Mathematics Subject Classification}
\newcounter{thm} \numberwithin{thm}{section}
\newtheorem{Theorem}[thm]{Theorem}
\newtheorem{Definition}[thm]{Definition}
\newtheorem{Claim}[thm]{Claim}
\newcommand{\thistheoremname}{}
\newtheorem*{genericthm*}{\thistheoremname}
\newenvironment{namedthm*}[1]
  {\renewcommand{\thistheoremname}{#1}%
   \begin{genericthm*}}
  {\end{genericthm*}}
\tikzset{mybrace/.style={decoration={brace,raise=1.8mm},decorate}}
\tikzset{mybracedown/.style={decoration={brace,mirror,raise=1.8mm},decorate}}
\date{}
\author[K. Bhowmick]{Krishnendu Bhowmick} \address{Johann Radon Institute for Computational and Applied Mathematics\\
Linz, Austria}
\email{Krishnendu.Bhowmick@oeaw.ac.at}
\author[M. Patry]{Miriam Patry} 
\email{m.patry@outlook.at}
\author[O. Roche-Newton]{Oliver Roche-Newton} \address{Institute for Algebra, Johannes Kepler Universit\"{a}t\\
Linz, Austria}
\email{o.rochenewton@gmail.com}
\begin{document}

\baselineskip=17pt

\title{Local differences determined by convex sets}

\begin{abstract} 

This paper introduces a new problem concerning additive properties of convex sets. Let $S= \{s_1 < \dots <s_n \}$ be a set of real numbers and let $D_i(S)= \{s_x-s_y: 1 \leq x-y \leq i\}$. We expect that $D_i(S)$ is large, with respect to the size of $S$ and the parameter $i$, for any convex set $S$. 

We give a construction to show that $D_3(S)$ can be as small as $n+2$, and show that this is the smallest possible size. On the other hand, we use an elementary argument to prove a non-trivial lower bound for $D_4(S)$, namely $|D_4(S)| \geq \frac{5}{4}n -1$. For sufficiently large values of $i$, we are able to prove a non-trivial bound that grows with $i$ using incidence geometry.
\end{abstract}

\date{}
\maketitle

\section{Introduction} 
Let \(S\subseteq \mathbb{R}\) be a finite set and write \(S = \{s_1,s_2,\dots,s_n\}\) such that \(s_i<s_{i+1}\)  for all \(i\in[n-1]\). $S$ is said to be a \textit{convex set} if \(s_{i+1}-s_{i} < s_{i+2}-s_{i+1}\) for all \(i\in [n-2]\).

In the spirit of the sum-product problem, Erd\H{o}s raised the question of whether $A+A$ and $A-A$ are guaranteed to be large for any convex set. Progress has been made towards this question via a combination of methods in incidence geometry (see for instance \cite{ENR}) and elementary methods (see \cite{G}, \cite{RSSS} and \cite{HRNR}), but the question of determining the best possible lower bounds remains open. The current state-of-the-art results state that, for some absolute constant $c>0$, the bounds
\begin{equation} \label{known}
|S-S| \gg \frac{ n^{8/5}}{(\log n)^c}, \,\,\,\,\, |S+S| \gg \frac{  n^{30/19}}{(\log n)^c}
\end{equation}
hold for any convex set $A \subset \mathbb R$, see \cite{SS} and \cite{RS} respectively. The notation $\gg$ is used to absorb a multiplicative constant. That is $X \gg Y$ denotes that there exists an absolute constant $C>0$ such that $X \geq CY$.

In this paper, we introduce a new \textit{local} variant of this problem, where only differences between close elements of $A$ are considered.
We define 
\[
D_i(S):=\{s_x - s_{y}: 1 \leq x-y \leq i \}.
\]
Moreover, for any set \(\mathcal{I}\subset [n-1]\), define 
\[
D_{\mathcal{I}}(S):=\{s_x - s_{y}\mid x-y \in \mathcal{I} \}.
\]
We expect that $D_i(S)$ is large (in terms of $i$) and that $D_{\mathcal{I}}(S)$ is large (in terms of $| \mathcal I|$) for any convex set $S \subset \mathbb R$. We begin with some trivial observations.
\begin{itemize}
    \item For any convex set $S$, $|D_1(S)|=|S|-1$. Indeed, it is an immediate consequence of the definition of a convex set that $S$ has \textit{distinct consecutive differences}. Additive properties of such sets were considered in \cite{RSSS} and  \cite{RuSol}.
    \item For any convex set $S$, we have $|D_2(S)| \geq |S|$. This can be seen by observing that $s_n - s_{n-2} \in D_{ \{2\}}(S) \setminus D_{ \{1\}}(S)$, and so
    \begin{equation} \label{lowerbound2}
    |D_2(S)|= | D_{ \{1\}}(S)| + | D_{ \{2\}}(S) \setminus D_{ \{1\}}(S)| \geq n-1 + 1 = n.
    \end{equation}
    On the other hand, if we consider the set
    \begin{equation} \label{eq 1}
 S = \{f_{i+3} : i \in [n]\},
    \end{equation}
    where  \(f_i\) is the \(i\)-th Fibonacci number, we see that \(|D_2(S)| = n\), and so the lower bound \eqref{lowerbound2} is optimal.
    \item Setting $i=n-1$, we see that the set $D_i(S)$ is equal to the set of positive elements of the difference set $S-S$. Therefore, the question of obtaining lower bounds for $D_{n-1}(S)$ is equivalent to that of lower bounding the size of $S-S$.
\end{itemize}

The main results of this paper concern the minimum possible sizes of $D_3(S)$ and $D_4(S)$ when $S$ is convex. Let \(\mathcal{S}(n)\) be the set of all convex sets of size \(n\). Define 
\[
D_i(n) := \min\{|D_i(S)|: S\in \mathcal{S}(n)\}.
\]
The discussion above shows that \(D_1(n) = n-1\) and \(D_2(n) = n\). For \(D_3(n)\) we will provide a constructive proof of the following result.
 \begin{Theorem}\label{Thm 1} For \(n\geq 5\)
 \[
 D_3(n) = n+2.
 \]
 \end{Theorem}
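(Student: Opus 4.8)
The plan is to work throughout with the consecutive differences $d_i := s_{i+1}-s_i$ for $1\le i\le n-1$; convexity is precisely the statement that $d_1<d_2<\cdots<d_{n-1}$. Writing $A_1=\{d_i:1\le i\le n-1\}$, $A_2=\{d_i+d_{i+1}:1\le i\le n-2\}$ and $A_3=\{d_i+d_{i+1}+d_{i+2}:1\le i\le n-3\}$, the gap structure gives $D_3(S)=A_1\cup A_2\cup A_3$, and strict monotonicity forces $|A_1|=n-1$. Since $|D_3(S)|=(n-1)+|A_2\setminus A_1|+|A_3\setminus(A_1\cup A_2)|$, the lower bound reduces to showing that $A_2\cup A_3$ contributes at least three elements outside $A_1$, and the construction amounts to producing a convex set for which it contributes exactly three.

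For the lower bound I would split on $t:=|A_2\setminus A_1|$. The two largest sums are automatically new: $d_{n-2}+d_{n-1}$ exceeds $\max A_1=d_{n-1}$, so it lies in $A_2\setminus A_1$, while $d_{n-3}+d_{n-2}+d_{n-1}$ exceeds $\max A_2$, so it lies in $A_3\setminus(A_1\cup A_2)$. This settles $t\ge 3$ and $t=2$ immediately. The case $t=1$ is the crux: here every sum $d_i+d_{i+1}$ with $i\le n-3$ must equal some $d_{\sigma(i)}\in A_1$; since $d_i+d_{i+1}>d_{i+1}$ we get $\sigma(i)\ge i+2$, and since $i\mapsto d_i+d_{i+1}$ is strictly increasing, $\sigma$ is a strictly increasing injection of $\{1,\dots,n-3\}$ into $\{3,\dots,n-1\}$. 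As domain and codomain both have size $n-3$, the bound $\sigma(i)\ge i+2$ is squeezed to equality, giving the Fibonacci recurrence $d_{i+2}=d_i+d_{i+1}$. Substituting this collapses each triple sum to $d_i+d_{i+1}+d_{i+2}=2d_{i+2}$, and comparing with neighbouring differences (using the recurrence to show $d_{i+3}<2d_{i+2}<d_{i+4}$ in the interior, with a direct check at the two top indices) shows that no element of $A_3$ lies in $A_1\cup A_2$; in particular $|A_3\setminus(A_1\cup A_2)|=n-3\ge 2$. Hence $|D_3(S)|\ge n+2$ in every case.

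For the matching construction I would define the differences by $d_k=d_{k-2}+d_{k-3}$, with positive increasing seeds satisfying $d_1+d_2>d_3$ (for instance $d_1,d_2,d_3=3,4,5$), and set $s_1=0$, $s_{k+1}=s_k+d_k$. An easy induction gives $d_k-d_{k-1}=d_{k-2}-d_{k-4}>0$, so $S$ is convex. The point of this particular recurrence is that it collapses the gap-$2$ and gap-$3$ sums simultaneously: it yields $d_i+d_{i+1}=d_{i+3}$ and $d_i+d_{i+1}+d_{i+2}=d_{i+5}$. Extending the sequence by the same recurrence to define $d_n,d_{n+1},d_{n+2}$, one checks that $A_1=\{d_1,\dots,d_{n-1}\}$, that $A_2=\{d_4,\dots,d_{n+1}\}$ contributes exactly the two new values $d_n,d_{n+1}$, and that $A_3=\{d_6,\dots,d_{n+2}\}$ contributes only the single new value $d_{n+2}$. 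Thus $D_3(S)=\{d_1,\dots,d_{n+2}\}$ has exactly $n+2$ elements, meeting the lower bound.

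I expect the delicate point to be the tight case $t=1$ of the lower bound: one must rule out every alternative to the Fibonacci recurrence and then verify that the triple sums genuinely escape $A_1\cup A_2$ rather than coincide with the extremal elements already counted. The boundary indices $i\in\{n-4,n-3\}$ need separate attention, since the interior squeeze $d_{i+3}<2d_{i+2}<d_{i+4}$ relies on differences that no longer exist there and must be replaced by direct comparisons against $d_{n-1}$ and $d_{n-2}+d_{n-1}$. On the construction side the only real idea is recognising that shifting the Fibonacci-type relation so that $d_i+d_{i+1}=d_{i+3}$ (rather than $d_{i+2}$) is exactly what drags the gap-$3$ sums back into the extended difference sequence, leaving precisely three surviving \qu{virtual} differences $d_n,d_{n+1},d_{n+2}$.
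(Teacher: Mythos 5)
Your proof is correct, and it splits naturally into two halves relative to the paper. The construction is essentially the paper's: the recurrence $s_i = s_{i-1}+s_{i-2}-s_{i-4}$ used there is exactly your $d_k = d_{k-2}+d_{k-3}$ in difference coordinates (seeds $10,13,17$ in place of your $3,4,5$), and your bookkeeping with the three virtual differences $d_n, d_{n+1}, d_{n+2}$ supplies the verification of $|D_3(S)|=n+2$ that the paper asserts with only the word \emph{indeed}. The lower bound, by contrast, is genuinely different. The paper notes $|D_3(S)|\ge |D_2(S)|+1\ge n+1$, assumes $|D_3(S)|=n+1$, and exhibits two increasing $(n+1)$-term chains inside $D_3(S)$ (one running $s_2-s_1<s_3-s_2<s_3-s_1<s_4-s_2<s_4-s_1<\cdots$, the other $s_2-s_1<s_3-s_2<s_4-s_3<s_4-s_2<s_5-s_3<\cdots$, both ending at $s_n-s_{n-3}$); each chain must then exhaust $D_3(S)$, so they agree termwise, and comparing third and fifth entries gives $s_4-s_3=s_3-s_1$ and $s_4-s_1=s_5-s_3$, whence $s_5-s_4=s_4-s_3$, contradicting convexity. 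That argument is purely local (only $s_1,\dots,s_5$ enter) and is shorter than yours. What your route buys is structure: your squeeze in the critical case $t=|A_2\setminus A_1|=1$ is airtight ($\sigma(i)\ge i+2$ together with $\sigma$ strictly increasing into the $(n-3)$-element set $\{3,\dots,n-1\}$ forces $\sigma(i)=i+2$), and it shows that $|D_2(S)|=n$ occurs only for Fibonacci-structured sets $d_{i+2}=d_i+d_{i+1}$, for which every triple sum $2d_{i+2}$ escapes $A_1\cup A_2$ and $|D_3(S)|=2n-3$. So you obtain a dichotomy strictly stronger than the theorem -- sets extremal for $D_2$ are far from extremal for $D_3$ -- and, amusingly, a sketch of exactly this rigidity observation sits in a commented-out passage of the paper's source, discarded by the authors in favour of the two-chain contradiction. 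Your flagged boundary cases do check out: for $i=n-4$ one has $d_{n-1}=d_{n-3}+d_{n-2}<2d_{n-2}<d_{n-2}+d_{n-1}$, and for $i=n-3$ the sum $2d_{n-1}$ exceeds $\max A_2 = d_{n-2}+d_{n-1}$, this being the element you had already counted, so the two required new contributions are distinct precisely when $n-3\ge 2$, matching the hypothesis $n\ge 5$ (at $n=5$ your bound $2n-3$ equals $n+2$ exactly).
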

However, there is an interesting change of behaviour that occurs when $i$ goes from $3$ to $4$, and \(D_4(n)\) is not of the form \(n + O(1)\). In this paper we will prove the following result.
\begin{Theorem}\label{Thm 2}
  \[ 
  D_4(n) \geq \frac{5}{4}n - 1.
  \]  
\end{Theorem}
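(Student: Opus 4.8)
The plan is to recast the statement in terms of the gap sequence and then to count, as tightly as possible, the coincidences among short consecutive sums. Write $d_i = s_{i+1}-s_i$ for $i \in [n-1]$; convexity says precisely that $d_1 < d_2 < \cdots < d_{n-1}$. For $1 \le x-y \le 4$ we have $s_x - s_y = d_y + d_{y+1} + \cdots + d_{x-1}$, a sum of $x-y$ consecutive terms of this increasing sequence, so $D_4(S)$ is exactly the set of all sums of $\ell$ consecutive $d_i$'s with $1 \le \ell \le 4$. I would organize these into four rows $R_\ell$ of length-$\ell$ sums, of sizes $n-1, n-2, n-3, n-4$. Since the $d_i$ strictly increase, shifting a length-$\ell$ window to the right strictly increases its sum, so each row is strictly increasing and in particular repetition-free; the entire problem is to bound the coincidences between rows. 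Writing $k_v$ for the number of rows in which a value $v \in D_4(S)$ occurs (so $1 \le k_v \le 4$), we have $|D_4(S)| = (4n-10) - W$ with $W = \sum_v (k_v - 1) = N_{\ge 2} + N_{\ge 3} + N_{\ge 4}$, where $N_{\ge r}$ counts values lying in at least $r$ rows; the target $|D_4(S)| \ge \frac54 n - 1$ is equivalent to $W \le \frac{11}{4} n - 9$.

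It is worth isolating the cheap contributions first, to see why they are insufficient. The increasing ``staircase'' $d_{n-1} < d_{n-2}+d_{n-1} < d_{n-3}+d_{n-2}+d_{n-1} < d_{n-4}+\cdots+d_{n-1}$ exhibits one value from each row lying above the previous maximum, which already recovers $|D_4(S)| \ge |D_3(S)| \ge n+2$ via Theorem \ref{Thm 1}. The trouble is that any such boundary or ``exceeds the old maximum'' argument (including the naive induction that deletes the top four differences) only ever gains an additive constant over $n$, whereas here a further gain of order $n/4$ is needed, and that gain must come from the interior.

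The main line of attack is a dichotomy on the size of $D_3(S)$. If $|D_3(S)| \ge \frac54 n - 1$ we are done at once, since $D_4(S) \supseteq D_3(S)$. Otherwise $D_3(S)$ is within $O(1)$ of its minimum $n+2$ from Theorem \ref{Thm 1}, which forces the rows $R_1, R_2, R_3$ to coincide almost completely: all but $O(1)$ of the length-$2$ and length-$3$ sums must already be length-$1$ sums, i.e. for almost every $i$ there are $j,j'$ with $d_i + d_{i+1} = d_j$ and $d_i + d_{i+1} + d_{i+2} = d_{j'}$. I would then show that this near-Fibonacci rigidity is incompatible with a comparable collapse of the length-$4$ row, so that at least $\sim n/4$ of the sums $d_i + d_{i+1} + d_{i+2} + d_{i+3}$ avoid $D_3(S)$. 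Because $R_4$ is strictly increasing, these escaping values are automatically pairwise distinct, and adjoining them to $D_3(S)$ produces the bound.

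The crux, and the step I expect to be the main obstacle, is exactly this last incompatibility: controlling the high-multiplicity part $N_{\ge 3} + N_{\ge 4}$ of $W$. A value lying in three or four rows satisfies several additive equations simultaneously (such as $d_i + d_{i+1} = d_j$ together with $d_{i'} + d_{i'+1} + d_{i'+2} = d_j$ and $d_{i''} + \cdots + d_{i''+3} = d_j$), and the windows realizing a common value must then be nested in a rigid way. The heart of the argument is to prove, using only the monotonicity of the $d_i$ and of their consecutive sums, that such fully collapsed values are scarce enough to force $W \le \frac{11}{4}n - 9$; concretely, that at least about $n/4$ of the length-$4$ sums cannot be written as a sum of one, two, or three consecutive differences. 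I anticipate this requires a careful case analysis of how the nested windows interact, combined with an amortized charging that debits each forced new length-$4$ value against a bounded block of indices — and it is precisely here that the extremal constant $\tfrac54$ is pinned down.
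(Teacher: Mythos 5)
Your setup is sound (the gap sequence $d_i = s_{i+1}-s_i$, the four strictly increasing rows $R_\ell$ of length-$\ell$ window sums, and the correct diagnosis that boundary arguments only gain $O(1)$ so the gain of order $n/4$ must come from interior coincidences), but the proof has two genuine gaps, one quantitative and one structural. Quantitatively, your dichotomy leaks a factor of $n$: if the first case $|D_3(S)| \geq \frac{5}{4}n-1$ fails, you only learn $|D_3(S)| < \frac{5}{4}n-1$, which allows $D_3(S)$ to exceed its minimum $n+2$ by roughly $n/4$. Your claim that $D_3(S)$ is then ``within $O(1)$ of its minimum'', so that all but $O(1)$ of the length-$2$ and length-$3$ sums collapse into $R_1$, does not follow -- up to $\Theta(n)$ windows may escape, and the ``near-Fibonacci rigidity'' you build on is unavailable at that strength. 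Structurally, and decisively, the crux step -- that collapse of $R_1,R_2,R_3$ forces $\sim n/4$ of the length-$4$ sums to avoid $D_3(S)$ -- is announced (``I anticipate this requires a careful case analysis\dots'') but never carried out; that step is the entire content of the theorem, so what you have is a plan, not a proof. Moreover, the multiplicity bookkeeping $W = N_{\geq 2}+N_{\geq 3}+N_{\geq 4}$ contains no mechanism that would produce such a statement: counting coincidences does not by itself show any value is \emph{absent} from a row.

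For comparison, the paper's proof never uses $R_3$ at all (it remarks that the argument in fact gives $|D_{\{1,2,4\}}(S)| \geq \frac{5}{4}n-2$) and dichotomizes instead on $|D_{\{2\}}(S)\setminus D_{\{1\}}(S)|$ with threshold $\frac{n}{4}-2$. If this is large, then $|D_4(S)| \geq |D_2(S)|+2 \geq \frac{5}{4}n-1$ directly. If it is small, then for at least $\frac{n}{2}+1$ indices $i$ both $s_{i+2}-s_i = d_j$ and $s_{i+3}-s_{i+1} = d_{j'}$ hold, and the proof splits on $j'=j+1$ versus $j' \geq j+2$. When $j'=j+1$ often, the length-$2$ value $s_{j+2}-s_j = (s_{i+2}-s_i)+(s_{i+3}-s_{i+1})$ is sandwiched strictly between the \emph{consecutive} elements $s_{i+3}-s_{i-1} < s_{i+4}-s_i$ of $D_{\{4\}}(S)$, so it escapes the length-$4$ row; when $j' \geq j+2$ often, the single difference $d_{j+1}$ lies strictly between the consecutive elements $s_{i+2}-s_i < s_{i+3}-s_{i+1}$ of $D_{\{2\}}(S)$, making $|D_{\{1\}}(S)\setminus D_{\{2\}}(S)|$ large. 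Note that the escape direction in the first subcase is the \emph{opposite} of the one you propose: it is a length-$2$ value avoiding $R_4$, not a length-$4$ value avoiding $D_3(S)$. To repair your argument you would need a sandwiching lemma of exactly this kind -- exhibiting a value strictly between two consecutive elements of some row -- and once you have it, the detour through $|D_3(S)|$ becomes unnecessary.
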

We also consider a stronger notion of convexity, namely \(k\)-convexity, which was one of the main considerations of the paper \cite{HRNR}.
 \begin{Definition}
\(S\) is called a \(k\)-convex set if \(D_1(S)\) is a \((k-1)\)-convex set. A convex set is also called  a \(1\)-convex set.
 \end{Definition}
Let \(\mathcal{S}^2(n)\) be the set of all \(2\)-convex sets of size \(n\). Define 
\[
D_i^2(n) := \min\{|D_i(S)|: S\in \mathcal{S}^2(n)\}.
\]
Note that \(D_1^2(n) = n-1\). The set $S$ defined in (\ref{eq 1}) is a \(2\)-convex set with \(|D_2(S)| = n\), and it therefore follows that \(D_2^2(n) = n\).

Similarly to Theorem \ref{Thm 1} we will prove the following result.
 \begin{Theorem}\label{Thm 2convex} For \(n\geq 5\)
 \begin{equation} \label{lowerbound}
 D_3^2(n) = n+2.
 \end{equation}
 \end{Theorem}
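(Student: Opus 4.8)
The lower bound requires no new work: since $2$-convex sets are in particular convex, we have $\mathcal{S}^2(n) \subseteq \mathcal{S}(n)$, and therefore $D_3^2(n) \ge D_3(n) = n+2$ by Theorem \ref{Thm 1}. The entire content of the statement is thus to exhibit, for each $n \ge 5$, a single $2$-convex set $S$ of size $n$ with $|D_3(S)| \le n+2$.

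My plan is to prescribe the consecutive differences $d_i := s_{i+1}-s_i$ rather than the points themselves, and to force the additive coincidences that keep $D_3(S)$ small by imposing a linear recurrence on the $d_i$. The key algebraic observation is that if
\[
d_{i+3} = d_i + d_{i+1} \qquad \text{for all admissible } i,
\]
then every $2$-apart and $3$-apart difference collapses onto a consecutive difference: indeed $d_i + d_{i+1} = d_{i+3}$, and applying the recurrence once more at index $i+2$ gives $d_i + d_{i+1} + d_{i+2} = d_{i+3} + d_{i+2} = d_{i+5}$. Writing $d_1, \dots, d_{n-1}$ for the genuine differences and $d_n, d_{n+1}, d_{n+2}$ for the three further terms obtained by continuing the recurrence, this yields $D_{\{2\}}(S) = \{d_4, \dots, d_{n+1}\}$ and $D_{\{3\}}(S) = \{d_6, \dots, d_{n+2}\}$. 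Since $D_{\{1\}}(S) = \{d_1, \dots, d_{n-1}\}$, the three sets together are exactly $\{d_1, \dots, d_{n+2}\}$, so $|D_3(S)| = n+2$, provided the strictly increasing continuation has all terms distinct.

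It remains to choose the initial values $d_1 < d_2 < d_3$ so that $S$ is genuinely $2$-convex, and this is where the only real difficulty lies. Setting $s_1 = 0$ and $s_{i+1} = s_i + d_i$, convexity of $S$ is just $d_1 < d_2 < \cdots$, while $2$-convexity asks that the second-order gaps $e_i := d_{i+1} - d_i$ be strictly increasing. A short computation shows that $(e_i)$ obeys the same recurrence $e_{i+3} = e_i + e_{i+1}$, with $e_1 = d_2 - d_1$, $e_2 = d_3 - d_2$ and $e_3 = d_1 + d_2 - d_3$; consequently $e_{i+1} - e_i = e_{i-1} - e_{i-3}$ for $i \ge 4$, so $(e_i)$ is strictly increasing as soon as its first few terms are. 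This reduces $2$-convexity to a finite system of inequalities on $d_1, d_2, d_3$, satisfied for instance by $d_1 = 10$, $d_2 = 13$, $d_3 = 17$ (giving the gap sequence $e = 3, 4, 6, 7, 10, \dots$). The same choice makes $(d_i)$ and its continuation strictly increasing, so all the distinctness claims above hold.

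I expect the main obstacle to be exactly the verification of $2$-convexity, that is, pinning down initial data for which the induced gap sequence $(e_i)$ increases \emph{from its first term on}; the asymptotic growth (governed by the plastic number, the real root of $x^3 = x+1$) guarantees increase for large $i$, but the early terms are delicate and force the tight inequalities above. Once suitable initial values are fixed, checking the small cases ($n=5$ in particular) and confirming the index bookkeeping in the counting of $D_{\{2\}}(S)$ and $D_{\{3\}}(S)$ is routine.
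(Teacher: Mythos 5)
Your proof is correct and takes essentially the same approach as the paper: the lower bound is inherited from Theorem \ref{Thm 1} via $\mathcal{S}^2(n)\subseteq\mathcal{S}(n)$ exactly as in the paper, and your recurrence $d_{i+3}=d_i+d_{i+1}$ with $(d_1,d_2,d_3)=(10,13,17)$ is precisely the paper's construction $s_1=0,\ s_2=10,\ s_3=23,\ s_4=40$, $s_i=s_{i-1}+s_{i-2}-s_{i-4}$, rewritten in terms of consecutive differences. If anything, you supply the verifications (the collapse of $D_{\{2\}}(S)$ and $D_{\{3\}}(S)$ onto the extended difference sequence, and the induction showing the second-order gaps $e_i$ increase) that the paper leaves as a bare assertion.
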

We give a slightly better lower bound for \(D_4^2(n)\) than that for \(D_4(n)\) given in Theorem \ref{Thm 2}.
 \begin{Theorem}\label{Thm 3}
 \[D_4^2(n) \geq \frac{4}{3}n - \frac{4}{3}.\]
 \end{Theorem}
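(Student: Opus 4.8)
The plan is to pass to the sequence of consecutive differences $d_i := s_{i+1}-s_i$ for $i \in [n-1]$ and to exploit $2$-convexity in the form of two monotonicity facts: the $d_i$ are strictly increasing, and the second differences $e_i := d_{i+1}-d_i$ are strictly increasing as well. Every element of $D_4(S)$ is a sum of between one and four consecutive terms of $(d_i)$, namely $s_{i+j}-s_i = d_i + d_{i+1}+\dots+d_{i+j-1}$ for some $1 \le j \le 4$. For each fixed gap $j$ this produces a strictly increasing sequence (since the $d_i$ are positive), so $D_4(S)$ is the union of four strictly increasing sequences of lengths $n-1,\,n-2,\,n-3,\,n-4$, and in particular $|D_{\{1\}}(S)| = n-1$. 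Throughout I would follow the blueprint of the proof of Theorem \ref{Thm 2}, the point being that the stronger hypothesis here — strict growth of the $e_i$ rather than of the $d_i$ alone — should upgrade the constant from $\frac54$ to $\frac43$.

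Next I would reformulate the problem as a counting statement. Assign to each value $v \in D_4(S)$ the smallest gap $j$ for which $v$ is realised as a gap-$j$ difference. The values with smallest gap $1$ are precisely the $n-1$ consecutive differences, so
\[
|D_4(S)| = (n-1) + \#\{\text{gap-}2,3,4\text{ differences equal to no difference of strictly smaller gap}\}.
\]
Call the differences counted on the right \emph{new}. Since $\frac43 n - \frac43 = (n-1) + \frac13(n-1)$, the theorem reduces to the assertion that at least $\frac13(n-1)$ of the gap-$2,3,4$ differences are new. As there are $(n-2)+(n-3)+(n-4) = 3n-9$ differences of gap $2,3,4$ in total, this only asks that a bounded fraction of them be new, which is what makes an elementary argument feasible.

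The heart of the matter is therefore to bound the \emph{redundant} medium differences, i.e. those equal to some difference of strictly smaller gap. Each such coincidence is an additive relation between two sums of consecutive $d_i$'s of different lengths, for instance $d_i + d_{i+1} = d_k$ (gap $2$ meeting gap $1$), $d_i+d_{i+1}+d_{i+2} = d_k + d_{k+1}$ (gap $3$ meeting gap $2$), and so on. For a single coincidence type the matching index map is forced to be strictly increasing (both gap-sequences increase), so each type is individually controlled; the trouble is that six such types could a priori each be dense. The real work — and the step I expect to be the main obstacle — is to show these types cannot all be simultaneously dense: the strict monotonicity of the $e_i$ should rule out the systems of simultaneous additive relations that maximal redundancy would require. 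Concretely, I would group the indices into consecutive blocks and show, using that $e_i$ is strictly increasing, that a block in which \emph{every} gap-$2,3,4$ difference is redundant forces an equation of the form $e_{k}+\dots+e_{\ell} = d_m$ which cannot hold for too many disjoint blocks at once; each surviving block then contributes a new difference, and choosing these new differences to increase across blocks keeps them distinct. This is exactly where plain convexity is too weak — giving only the $\frac54$ of Theorem \ref{Thm 2} — and where the second-order growth is essential.

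Finally I would assemble the counts: summing the per-block contributions yields at least $\frac13(n-1)$ new medium differences, and a short check of the boundary blocks (and of the small cases) pins the constant down to $|D_4(S)| \ge \frac43 n - \frac43$. I would expect the extremal configurations, against which the block analysis must be calibrated, to be precisely those making as many of the additive relations above hold at once; verifying that no such configuration beats the bound is the quantitative core of the argument.
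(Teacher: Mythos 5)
There is a genuine gap: your proposal is a reduction plus a plan, and the entire quantitative content of the theorem is deferred to an unproven claim. The reformulation is fine (assigning each value its smallest gap, $|D_4(S)| = (n-1) + \#\{\text{new values}\}$, and $\frac43 n - \frac43 = (n-1)+\frac13(n-1)$ is correct arithmetic), but the step that would deliver the $\frac13(n-1)$ new values --- that a block in which every gap-$2,3,4$ difference is redundant forces an equation $e_k+\dots+e_\ell = d_m$, and that such equations cannot hold for many disjoint blocks --- is stated as an expectation (\emph{``the step I expect to be the main obstacle''}) and never argued. Worse, the heuristic you lean on is false: the assertion that each coincidence type is \emph{``individually controlled''} because the matching index map is strictly increasing does not bound the number of coincidences at all. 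The Fibonacci-type set in equation \eqref{eq 1} is $2$-convex with $|D_2(S)|=n$, so the single type ``gap $2$ meets gap $1$'' already occurs for all but $O(1)$ of the indices; likewise the construction proving Theorem \ref{Thm 2convex} is a $2$-convex set with $|D_3(S)|=n+2$, so gap-$2$ and gap-$3$ differences can be almost entirely redundant simultaneously. Any correct proof must extract new values from the \emph{interaction} of the redundancies, and your outline contains no mechanism for that; moreover, relations such as $d_i+d_{i+1}=d_k$ involve an index $k$ far outside the block containing $i$, so the blockwise locality your plan assumes does not hold.

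For comparison, the paper's proof is short and uses $2$-convexity through a single forcing claim. It splits on $|D_{\{2\}}(S)\setminus D_{\{1\}}(S)|$: if this is at least $\frac n3 - \frac73$, then $|D_4(S)| \geq |D_2(S)|+2 \geq (n-1)+(\frac n3-\frac73)+2$ and we are done (the $+2$ coming from the two largest elements of $D_4(S)$, which exceed everything in $D_2(S)$). Otherwise, for at least $\frac n3 + \frac53$ indices $i$ both $s_{i+2}-s_i = d_j$ and $s_{i+3}-s_{i+1} = d_{j'}$ with $j' > j > i$, and here is where second-order convexity enters: if $j' \geq j+2$ then $d_{j+2}-d_j \leq d_{j'}-d_j = d_{i+2}-d_i$ with $j>i$, contradicting convexity of $D_1(S)$; hence $j'=j+1$. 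Then $s_{j+2}-s_j = d_j + d_{j+1} \in D_{\{2\}}(S)$ is strictly sandwiched between the \emph{consecutive} gap-$4$ elements $s_{i+3}-s_{i-1}$ and $s_{i+4}-s_i$, so it lies outside $D_{\{4\}}(S)$, giving $|D_4(S)| \geq 1 + (n-4) + \frac n3 + \frac53 = \frac43 n - \frac43$. Note the new values are exhibited as new relative to $D_{\{4\}}(S)$, not relative to smaller gaps as in your reduction. If you want to salvage your approach, this sandwich argument --- convexity making $D_{\{4\}}(S)$ too spread out to contain a sum $d_j+d_{j+1}$ of two consecutive differences located at a later index --- is precisely the missing mechanism.
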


We expect that the values of $D_i(n)$ and $D_i^2(n)$ increase with $i$. However, we are not able to generalise the elementary techniques used to prove Theorems \ref{Thm 2} and \ref{Thm 2convex} to obtain better bounds by considering larger $i$. On the other hand, for sufficiently large $i$, incidence geometric techniques can be used to give a non-trivial bound.

\begin{Theorem} \label{thm:largei} For all $i \in \mathbb N$,
\[
D_i(n) \gg i^{3/2}.
\]
\end{Theorem}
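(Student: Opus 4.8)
My plan is to reduce the bound to the classical Elekes--Nathanson--Ruzsa estimate for difference sets of convex sets, applied inside a single window of \(i+1\) consecutive elements; this is exactly where the exponent \(3/2\) (hence \(i^{3/2}\)) comes from, and it also explains why the bound cannot grow with \(n\).

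\textbf{Window reduction.} First I would note that we may assume \(n \geq i+1\), the only range in which the stated inequality has content. Let \(S\) be any convex set of size \(n\) and let \(W = \{s_1 < \dots < s_{i+1}\}\) consist of its first \(i+1\) elements. Then \(W\) is again convex, and since any two indices in \([i+1]\) differ by at most \(i\), every positive difference of \(W\) appears in \(D_i(S)\); that is, \((W-W)\cap \mathbb{R}_{>0} \subseteq D_i(S)\). Hence \(|D_i(S)| \geq \tfrac12\big(|W-W|-1\big)\), and it suffices to prove that every convex set \(W\) of size \(m := i+1\) satisfies \(|W-W| \gg m^{3/2}\).

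\textbf{The incidence configuration.} To establish this I would run the Elekes--Nathanson--Ruzsa argument. Write \(W = \{f(1),\dots,f(m)\}\), where \(f\) is a strictly convex function interpolating the points \((j,s_j)\); such an \(f\) exists because the consecutive differences of \(W\) increase, so the points are in strictly convex position. Then consider the point set \(\mathcal{P} = ([m]-[m]) \times (W-W)\) together with, for each pair \((a,b) \in [m]^2\), the curve \(\gamma_{a,b} = \{(c-b,\ f(c)-f(a)) : c \in \mathbb{R}\}\), which is the graph of \(f\) translated by \((-b,-f(a))\). For fixed \((a,b)\) the \(m\) points \((c-b,\, f(c)-f(a))\) with \(c \in [m]\) all lie in \(\mathcal{P}\) and on \(\gamma_{a,b}\), so this family of \(m^2\) curves determines at least \(m^2\cdot m = m^3\) incidences with \(\mathcal{P}\).

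\textbf{Pseudolines, Szemer\'edi--Trotter, and the obstacle.} The crux is that \(\{\gamma_{a,b}\}\) is a pseudoline family: two distinct curves \(\gamma_{a_1,b_1},\gamma_{a_2,b_2}\) meet at most once, since their common points solve \(f(x+b_1)-f(x+b_2) = f(a_1)-f(a_2)\), whose left-hand side is strictly monotone in \(x\) when \(b_1\neq b_2\) (as \(f'\) is strictly increasing), while for \(b_1=b_2\) the curves are disjoint vertical translates. The Szemer\'edi--Trotter bound for pseudolines then gives \(m^3 \ll |\mathcal{P}|^{2/3}(m^2)^{2/3} + |\mathcal{P}| + m^2\), and a short case check forces \(|\mathcal{P}| \gg m^{5/2}\); since \(|\mathcal{P}| = (2m-1)\,|W-W| \ll m\,|W-W|\), this yields \(|W-W| \gg m^{3/2}\), and combined with the window reduction, \(D_i(n) \gg (i+1)^{3/2} \gg i^{3/2}\). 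I expect the incidence step to be the only delicate part: one must confirm that the translated copies of the convex curve genuinely form a pseudoline family (so that the standard Szemer\'edi--Trotter exponents apply) and check that the main term dominates the two linear error terms in the relevant range; the window reduction and the passage to \(|W-W|\) are routine.
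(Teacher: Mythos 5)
Your argument is correct, but it takes a genuinely different route from the paper's. You localize: after reducing to the meaningful range $n \geq i+1$, you restrict to the window $W = \{s_1, \dots, s_{i+1}\}$, observe that every positive difference of $W$ lies in $D_i(S)$, and then re-derive the Elekes--Nathanson--Ruzsa bound $|W - W| \gg |W|^{3/2}$ for the convex set $W$; your incidence configuration, the pseudoline verification for translates of a strictly convex graph, and the case analysis in the Szemer\'edi--Trotter bound all check out, and indeed you could shorten the write-up by citing \cite{ENR} as a black box. The paper instead runs the incidence argument globally on all of $S$: it takes the point set $P = \{-n, \dots, n\} \times D_i(S)$ and the roughly $n^2/2$ translates $\ell_{j,s_h} \colon y = f(x+j) - s_h$ with $h \leq n/2$, counts $\gg n^2 i$ incidences (one for each $k$ with $1 \leq k-h \leq i$), and compares against the same Szemer\'edi--Trotter-type bound for translates of a convex curve. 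Your version is more modular and makes transparent why the bound cannot grow with $n$ (all the action happens inside a single window of length $i+1$); it also sidesteps a minor imprecision in the paper, whose count $I(P,L) \gg |L|\, i$ tacitly needs $i \ll n$ because a curve $\ell_{j,s_h}$ only carries $\min(i, n-h)$ such incidences (the regime $n/2 < i \leq n-1$ needs a word, though it is easily repaired). What the paper's global formulation buys is generality: since its incidences are counted over all pairs at distance at most $i$ rather than within one window, the identical proof yields the restricted-difference bound \eqref{graphs} for arbitrary $G \subset [n] \times [n]$, which your window reduction cannot reach, as the pairs of $G$ need not cluster in any short interval of indices. One last nit: your conclusion should absorb the $O(1)$ loss from $|D_i(S)| \geq \frac{1}{2}\left(|W-W| - 1\right)$ and the degenerate small-$m$ case where the term $m^2$ could dominate, but as you note both are trivial, e.g.\ $|D_i(S)| \geq n-1 \geq i$ settles $i = O(1)$ by adjusting the implied constant.
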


In particular, if $i \geq  n^{2/3+\epsilon}$, then Theorem \ref{thm:largei} gives a lower bound of the form $D_i(n) \gg n^{1+\epsilon'}$. It appears plausible that such a lower bound for $D_i(n)$ holds under the weaker assumption that $i \geq n^{\epsilon}$.


\section{Proofs of the main results}

 \begin{proof}[Proof of Theorems \ref{Thm 1} and \ref{Thm 2convex}]

 To prove that $D_3(n) \geq n+2$, we need to prove that
 \begin{equation} \label{lowerboundd}
 |D_3(S)| \geq n+2
 \end{equation}
 holds for an arbitrary convex set $S$ with $|S|=n$. Observe that
 \(|D_{3}(S)| \geq |D_{2}(S)| + 1\), since \(s_n-s_{n-3}\in D_3(S)  \setminus D_{2}(S)\). Hence \(|D_3(S)| \geq n+1\). Suppose for a contradiction that \(|D_3(S)| = n+1\) for some \(n \geq 5\).  Observe that 
 \[
  s_2-s_1 < s_3-s_2 < s_3-s_1 < s_4-s_2 < s_4-s_1 < s_5-s_2 < \dots < s_n-s_{n-3}
 \] 
 and 
 \[
  s_2-s_1 < s_3-s_2 < s_4-s_3 < s_4-s_2 < s_5-s_3 < s_5-s_2 < \dots < s_n-s_{n-3}.
 \] 
 We have identified two increasing sequences of length $n+1$ in $D_3(S)$, and so the sequences must be identical. Comparing the third terms of the sequences, we get
 \begin{equation}\label{eq 2}
     s_4-s_3 = s_3-s_1
 \end{equation}
 and comparing the fifth terms of the sequences, we get
 \begin{equation}\label{eq 3}
     s_4-s_1 = s_5-s_3.
\end{equation}
From equations \eqref{eq 2} and \eqref{eq 3} we get 
\[
s_5-s_4 = s_3 - s_1 = s_4- s_3.
\] 
But as \(S\) is a convex set this is not possible. Hence our assumption that \(|D_3(S)| = n+1\) was wrong, and so \(|D_3(S)| \geq n+2\), proving \eqref{lowerboundd}.

Since every $2$-convex set is also convex, it follows that $|D_3(S)| \geq n+2$ for all $2$-convex sets $S$. Therefore, $D_3^2(n) \geq n+2$.

 To prove that $D_3(n) \leq n+2$ and $D_3^2(n) \leq n+2$, we provide an example of a $2$-convex (and thus also convex) set  with \(|D_3(S)|=n+2\). Consider the set \(S = \{s_1,s_2,\dots,s_n\}\) where 
 \begin{align*}
& s_1 = 0, \, s_2 = 10, \, s_3 = 23, \, s_4 = 40,
 \\ & s_i:= a_{i-1}+s_{i-2}-s_{i-4} \,\,\,\,  \text{for }  i=5,\dots,n.
 \end{align*} Indeed, \(|D_3(S)| = n+2\) and $S$ is $2$-convex.

 \end{proof}

Taking a closer look at the construction of the set $S$ defined above, we see that there is additional structure, as the set $D_{\{5 \}}(S)$ overlaps significantly with $D_3(S)$. To be precise, we have
\[
|D_{\{1,2,3,5\}}(S)|=n+4.
\]
Since the two largest elements of $D_{\{1,2,3,5\}}(S)$ are not in $D_{3}(S)$, it follows from Theorem \ref{Thm 1} that $D_{\{1,2,3,5\}}(S) \geq n+4$ for any convex set $S$, and so this construction is optimal in this regard.

\begin{proof}[Proof of Theorem \ref{Thm 2}]

To prove Theorem \ref{Thm 2}, we need to show that
\begin{equation} \label{lowerbound3}
|D_4(S)| \geq \frac{5}{4}n - 1
\end{equation}
holds for an arbitrary convex set $S$ with size $n$. Indeed, let $S$ be such a set. The proof of \eqref{lowerbound3} is split into two cases.

\textbf{Case 1} - Suppose that $|D_{\{2\}}(S) \setminus D_{\{1\}}(S)| \geq \frac{n}{4} -2 $. It follows that
\[
|D_4(S)| \geq |D_2(S)| +2 = |D_{ \{1 \}}(S) \cup [D_{\{2\}}(S) \setminus D_{\{1\}}(S)]| +2 \geq (n-1) +(n/4-2)+2=\frac{5n}{4}-1.
\]

\textbf{Case 2} - Suppose that $|D_{\{2\}}(S) \setminus D_{\{1\}}(S)| < n/4 -2 $. 
It follows that
\[
s_{i+2}-s_i ,s_{i+3}-s_{i+1} \in D_{ \{ 1 \}}(S)
\]
holds for at least 
\[
(n-3)- 2(n/4-2) = \frac{n}{2} +1
\]
values of $i \in [n-3]$. Let $I$ be the set of all such $i$. Then, for all $i \in I$, 
\[
s_{i+2}-s_i=s_{j+1}-s_j, \,\,\,\, s_{i+3}-s_{i+1} =s_{j'+1}-s_j,
\]
for some $j,j' \in [n-1]$.

\textbf{Case 2a} - Suppose that, for at least $\frac{n}{4} +2 $ of the elements $i \in I$, we have $j'=j+1$. Then $s_{j+2} - s_j$ is in $D_{\{2\}}(S)$, but it is also strictly between two consecutive elements of \(D_{\{4\}}(S)\). Indeed
 \begin{equation*}
    \begin{split}
    s_{i+3}-s_{i-1}  & = (s_{i+3}-s_{i+1}) + (s_{i+1}-s_{i-1})\\
    & < (s_{i+3}-s_{i+1}) + (s_{i+2}-s_{i})\\ 
    & = (s_{j+2}-s_{j+1}) + (s_{j+1}-s_{j})\\
    & = s_{j+2}-s_{j} \\
    & < (s_{i+4}-s_{i+2}) + (s_{i+2}-s_{i})\\ 
    & = s_{i+4}-s_{i}.
\end{split}
\end{equation*} 
It follows that $|D_{ \{ 2 \}}(S) \setminus D_{\{4\}}(S)| \geq \frac{n}{4}+2$. Therefore,
\[
|D_4(S)| \geq 1 + |D_{\{2,4 \}}(S)| = 1+ |D_{\{4\}}(S) \cup [ D_{ \{ 2 \}}(S) \setminus D_{\{4\}}(S)]| \geq 1 + n-4 +\frac{n}{4} +2 = \frac{5n}{4}-1.
\]

\textbf{Case 2b} - Suppose that we are not in case 2a, and hence, for at least $\frac{n}{4}-1$ of the elements of $I$, we have $j' \geq j+2$. Then $s_{j+2}-s_{j+1}$ lies strictly between two consecutive elements of $D_{ \{2 \}}$, namely $s_{i+2}-s_i$ and $s_{i+3}-s_{i+1}$. Therefore, $|D_{ \{ 1 \}}(S) \setminus D_{\{2\}}(S)| \geq \frac{n}{4}-1$, and thus
\[
|D_4(S)| \geq  |D_2(S)| +2 = |D_{ \{2 \}}(S) \cup [D_{\{1\}}(S) \setminus D_{\{2\}}(S)]| +2 \geq (n-2)+  (n/4-1) + 2= \frac{5n}{4}-1.
\]

\end{proof}

A closer look at the proof of Theorem \ref{Thm 2} reveals that we have barely used the elements of $D_{ \{  3 \}}(S)$ anywhere in the proof of \eqref{lowerbound3}. By modifying the proof slightly, we obtain the bound
\[
|D_{ \{1,2,4\}}(S)| \geq \frac{5}{4}n - 2
\]
for any convex set $S$ with cardinality $n$.

The construction of the set $S$ from Theorem \ref{Thm 1} yields the bound
\[
|D_4(S)| \leq |D_3(S)| + (n-4) = 2n-2.
\]
Combining this observation with the result of Theorem \ref{Thm 2}, we see that
\[
\frac{5}{4}n -1 \leq D_4(n) \leq 2n-2.
\]

We now proceed to the proof of Theorem \ref{Thm 3}. The proof is largely the same as that of Theorem \ref{Thm 2}. The main difference is that we can use the $2$-convex condition to show that Case 2b does not occur.

\begin{proof}[Proof of Theorem \ref{Thm 3}]

To prove Theorem \ref{Thm 3}, we need to show that
\begin{equation} \label{lowerbound4}
|D_4(S)| \geq \frac{4}{3}n - \frac{4}{3}
\end{equation}
holds for an arbitrary $2$-convex set $S$ with size $n$. Indeed, let $S$ be such a set. The proof is split into two cases.

\textbf{Case 1} - Suppose that $|D_{\{2\}}(S) \setminus D_{\{1\}}(S)| \geq \frac{n}{3} - \frac{7}{3} $. It follows that
\[
|D_4(S)| \geq |D_2(S)| +2 = |D_{ \{1 \}}(S) \cup [D_{\{2\}}(S) \setminus D_{\{1\}}(S)]| +2 \geq (n-1) +\left(\frac{n}{3} - \frac{7}{3}\right)+2=\frac{4}{3}n-\frac{4}{3}.
\]

\textbf{Case 2} - Suppose that $|D_{\{2\}}(S) \setminus D_{\{1\}}(S)| < \frac{n}{3} - \frac{7}{3} $. 
It follows that
\[
s_{i+2}-s_i ,s_{i+3}-s_{i+1} \in D_{ \{ 1 \}}(S)
\]
holds for at least 
\[
(n-3)- 2\left (\frac{n}{3}-\frac{7}{3} \right) = \frac{n}{3} + \frac{5}{3}
\]
values of $i \in [n-3]$. Let $I$ be the set of all such $i$. Then, for all $i \in I$, 
\[
s_{i+2}-s_i=s_{j+1}-s_j, \,\,\,\, s_{i+3}-s_{i+1} =s_{j'+1}-s_j,
\]
for some $j,j' \in [n-1]$ satisfying $j' > j >i$. We claim now that it must be the case that $j'=j+1$. Indeed, suppose for a contradiction that $j' \geq j+2$. It then follows that
\begin{align*}
(s_{j+3}-s_{j+2}) - (s_{j+1}-s_j) &\leq (s_{j'+1}-s_{j'}) - (s_{j+1}-s_j) 
\\&= (s_{i+3}-s_{i+1})-(s_{i+2}-s_{i})
\\& = (s_{i+3}-s_{i+2})-(s_{i+1}-s_{i}) .
\end{align*}
However, since $j >i$, this contradicts the assumption that $S$ is $2$-convex. Indeed, write the convex set $D_1(S)= \{d_1 < d_2 < \dots < d_{n-1}\}$, and so $d_i=s_{i+1}-s_i$. Then the previous inequality can be written as
\[
d_{j+2} - d_j \leq d_{i+2} - d_i.
\]
But this inequality cannot hold if $D_1(S)$ is convex, which proves the claim.

As was the case in the proof of Theorem \ref{Thm 2}, $s_{j+2} - s_j$ is in $D_{\{2\}}(S)$, but it is also strictly in-between two consecutive elements of \(D_{\{4\}}(S)\). Indeed
 \begin{equation*}
    \begin{split}
    s_{i+3}-s_{i-1}  & = (s_{i+3}-s_{i+1}) + (s_{i+1}-s_{i-1})\\
    & < (s_{i+3}-s_{i+1}) + (s_{i+2}-s_{i})\\ 
    & = (s_{j+2}-s_{j+1}) + (s_{j+1}-s_{j})\\
    & = s_{j+2}-s_{j} \\
    & < (s_{i+4}-s_{i+2}) + (s_{i+2}-s_{i})\\ 
    & = s_{i+4}-s_{i}.
\end{split}
\end{equation*} 
It follows that $|D_{ \{ 2 \}}(S) \setminus D_{\{4\}}(S)| \geq \frac{n}{3}+\frac{5}{3}$. Therefore,
\[
|D_4(S)| \geq 1 + |D_{\{2,4 \}}(S)| = 1+ |D_{\{4\}}(S) \cup [ D_{ \{ 2 \}}(S) \setminus D_{\{4\}}(S)]| \geq 1 + n-4 +\frac{n}{3} + \frac{5}{3} = \frac{4}{3}n - \frac{4}{3}.
\]

\end{proof}

We now turn to the proof of Theorem \ref{thm:largei}, which is a modification of the proof of the main result in \cite{ENR}.

\begin{proof}[Proof of Theorem \ref{thm:largei}]

Let $S = \{ s_1 <  \dots <s_n \}$ be a convex set. We will prove that
\[
D_i(S) \gg i^{3/2}.
\]
Since $S$ is convex, it follows that there exists a strictly convex function $f: \mathbb R \rightarrow  \mathbb R$ such that $f(i)=s_i$.


Define $P$ to be the point set
\[
P=\{ -n, -n+1, \dots, n-1,n \} \times D_i(S).
\]
Let $\ell_{a,b}$ denote the curve with equation $y= f(x+a)-b$. Define $L$ to be the set of curves
\[
L= \{ \ell_{j,s_h} : 1 \leq j \leq n, 1 \leq h \leq n/2 \}.
\]
The set $L$ consists of translates of the same convex curve, and it therefore follows that we have the Szemer\'{e}di-Trotter type bound
\begin{equation} \label{ST}
I(P,L) \ll |P|^{2/3}|L|^{2/3} + |P| + |L|,
\end{equation}
where
\[
I(P,L):= \{ (p, \ell) \in P \times L : p \in \ell \}.
\]
For each $\ell_{j,s_h} \in L$, and for any $k$ such that $1 \leq k-h \leq i$, observe that
\[
(k-j, s_k-s_h) \in P \cap \ell_{j,s_h}.
\]
This implies that $I(P,L) \gg |L|i \gg n^2i$. Comparing this bound with \eqref{ST} yields
\[
n^2i \ll n^2 |D_i(S)|^{2/3} + n|D_i(S)| + n^2.
\]
If the third term on the right hand side is dominant then $i=O(1)$ and Theorem \ref{thm:largei} holds trivially. If the second term is dominant then we have
\[
|D_i(S)| \gg ni \geq i^{3/2}.
\]
If the third term is dominant then we obtain the stated bound
\[
|D_i(S)| \gg i^{3/2}.
\]

\end{proof}

By making a small modification to the proof of Theorem \ref{thm:largei}, one can prove that, for any $G \subset [n] \times [n]$,
\begin{equation} \label{graphs}
| \{ s_x - s_y : (x,y) \in G \}| \gg \left (\frac{|G|}{n} \right ) ^{3/2}.
\end{equation}
Similar sum-product type results for restricted pairs have been considered in, for instance, \cite{ARS} and \cite{RN}. Note that the bound in \eqref{graphs} becomes meaningful when $|G|$ is significantly larger than $n$. A construction of a convex set with a rich difference in \cite{RNW} shows that the set $ \{ s_x - s_y : (x,y) \in G \}$ can have cardinality as small as one when $G$ has cardinality as large as $ n/2$.

\section{Concluding remarks}

\subsection{Sums instead of differences}

One may also consider a version of this question with sums. Let $E_i(S)$ denote the set
\[
E_i(S)= \{ s_x+s_y : 1 \leq x-y \leq i \}.
\]
However, it turns out that this modification to the question makes it rather straightforward to prove a non-trivial bound that grows with $i$. If we split $S$ into disjoint consecutive blocks $S_1 \cup S_2 \cup \dots \cup S_t$, with each block having $i$ elements (we possibly discard some elements to ensure that all blocks have exactly the same size, and so $t= \lfloor n/i \rfloor$), then the sum sets $S_j+S_j$ are pairwise disjoint. It then follows from the lower bound for $S_j+S_j$ given in \eqref{known} that
\[
|S+S| \geq \sum_{j=1}^t |S_j+S_j| \gg\frac{n}{i} \cdot \frac{ i^{30/19}}{ (\log n)^c} = \frac{ni^{11/19}}{ (\log n)^c}.
\]

\subsection{Sets $\mathcal I$ such that $D_{ \mathcal I}=n+O(1)$}

In Theorem \ref{Thm 1}, we have seen that $D_{\{1,2,3 \}}(n)=n+2$. There are other examples of sets $\mathcal I \subset [n-1]$ with cardinality 3 such that $D_{ \mathcal I}(n)=n+O(1)$. For example, we can define $S$ using the recurrence relation
\[
s_n= s_{n-2}+s_{n-3}-s_{n-6}.
\]
By choosing the initial elements of $S$ suitably, the set $S$ is convex. However, this recurrence relations gives rise to the system of equations
\[
s_j-s_{j-2}=s_{j-3}-s_{j-6} = s_{j-5} - s_{j-9}, \,\,\,\, \forall \,\, 10 \leq j \leq n.
\]
This implies that the elements of $D_ {\{2 \}}(S), D_ {\{3 \}}(S)$ and $D_ {\{4 \}}(S)$ are largely the same, with just a few exceptions occurring at the extremes of the three sets. It follows that $D_ {\{2 ,3,4\}}(S)=n+C$, for some absolute constant $C$. It appears likely that the same argument can be used to show that $D_{\{k,k+d,k+2d\}}(n)=n +O_{k,d}(1)$.

On the other hand, we have seen in this paper that $D_{ \{1,2,4 \}}(n) = \frac{5n}{4} -O(1)$. This raises the following question: can we classify the sets $\mathcal I$ with cardinality $3$ with the property that $D_{ \mathcal I}(n) =n+C$, where $C$ is some constant (which may depend on the elements of $\mathcal I$)?

\section*{Acknowledgements}

The authors were supported by the Austrian Science Fund FWF Project P 34180. Part of this work was carried out while the second author was doing an internship supported by FFG Project 895224 - JKU Young Scientists. We are grateful to Audie Warren and Dmitrii Zhelezov for helpful discussions.


\begin{thebibliography}{99}

\bibitem{ARS} N. Alon, I. Z. Ruzsa and J. Solymosi, `Sums, products, and ratios along the edges of a graph', \textit{Publ. Mat.} 64 (2020), no. 1, 143-155.


\bibitem{ENR} G. Elekes, M. Nathanson and I. Ruzsa, `Convexity and sumsets', \textit{J. Number Theory.} 83 (1999), 194-201.


\bibitem{G} M. Z. Garaev, `On lower bounds for the {$L_1$}-norm of some exponential sums', \textit{Mat. Zametki} 68 (2000), no. 6, 842-850.

\bibitem{HRNR} B. Hanson, O. Roche-Newton and M. Rudnev, `Higher convexity and iterated sum sets', \textit{Combinatorica} 42 (2022), no. 1, 71-85.










\bibitem{RN} O. Roche-Newton, `Sums, products, and dilates on sparse graphs',  \textit{SIAM J. Discrete Math.}  35 (2021), no. 1, 194-204.

\bibitem{RNW} O. Roche-Newton and A. Warren, `A convex set with a rich difference',  \textit{Acta Math. Hungar.}  168 (2022), no. 2, 587-592.


\bibitem{RS} M. Rudnev and S. Stevens, `An update on the sum-product problem', \textit{Math. Proc. Camb. Phil. Soc.} 173 (2022), no. 2, 411-430.



\bibitem{RSSS} I. Z. Ruzsa, G. Shakan, J. Solymosi and E. Szemer\'{e}di, `On distinct consecutive differences',  \textit{Combinatorial and additive number theory IV} 425-434,
Springer Proc. Math. Stat., 347, Springer, Cham, (2021).

\bibitem{RuSol} I. Z. Ruzsa and J. Solymosi, `Sumsets of semiconvex sets',  \textit{Canad. Math. Bull.}  65 (2022), no. 1, 84-94.

\bibitem{SS} T. Schoen and I. Shkredov, `On sumsets of convex sets', \textit{Combin. Probab. Comput.} 20 (2011), no. 5, 793-798.











\end{thebibliography}
\end{document}